\DeclareMathOperator{\TR}{Tr}
\DeclareMathOperator{\COV}{Cov}
\newcommand{\Exp}[1]{\mathbb{E}\big[ #1\big]}
\newcommand{\B}{ \mathtt{blank} }
\newtheorem{Assumption}{Assumption}
\newtheorem{Problem}{Problem}
\title{\LARGE \bf
MinMax Mean-Field Team Approach for  a Leader-Follower Network:  A Saddle-Point Strategy
}
\author{Mohammad M. Baharloo, Jalal Arabneydi and Amir G. Aghdam
\thanks{This work has been supported in part by the Natural Sciences and Engineering Research Council of Canada (NSERC) under Grant RGPIN-262127-17, and in part by Concordia University under Horizon Postdoctoral Fellowship.}  
\thanks{Mohammad M. Baharloo, Jalal Arabneydi, and Amir G. Aghdam are with the  Department of Electrical and Computer Engineering, 
        Concordia University, 1455 de Maisonneuve Blvd. West, Montreal, QC, Canada, Postal Code: H3G 1M8.  Email: {\tt\small baharloo@ieee.org}, {\tt\small jalal.arabneydi@mail.mcgill.ca},        
        {\tt\small aghdam@ece.concordia.ca}}%
}
\begin{document}
\maketitle

\vspace*{-5.2cm}{\footnotesize{Proceedings of IEEE IEEE Control Systems Letters, 2020.}}
\vspace*{4.45cm}

\thispagestyle{empty}
\pagestyle{empty}
\begin{abstract}
This paper investigates  a soft-constrained  MinMax control problem  of  a leader-follower network. The network consists of one leader and an arbitrary number of followers  that  wish to  reach consensus with  minimum energy consumption in the presence of external disturbances.   The  leader and  followers  are coupled   in  the dynamics and cost function. Two non-classical information structures are considered: mean-field sharing and intermittent mean-field sharing, where  the mean-field  refers to  the  aggregate state of the followers. In  mean-field sharing, every follower observes its local state, the state of the leader and the mean field while  in the intermittent mean-field sharing,  the mean-field is only  observed  at some (possibly no)  time  instants.  A social welfare cost function is defined, and it is shown that a unique saddle-point strategy exists which  minimizes the worst-case  value of the cost function  under  mean-field sharing information structure. The solution is obtained by two scalable Riccati equations, which  depend on a prescribed attenuation parameter, serving as a robustness factor. For the  intermittent mean-field sharing information structure, an approximate saddle-point strategy is proposed,   and its  converges to the  saddle-point is analyzed.  Two numerical examples are provided to demonstrate the efficacy of the  obtained results.
\end{abstract}

\section{Introduction}
Recently, there has been a surge of  interest in the application of  networked control systems in various engineering problems such as   sensor networks, swarm robotics and cooperative coordination of unmanned aerial vehicles, to name only a few.  In this type of system, it is desired to achieve a global objective (such as consensus or flocking) using local control laws with limited information exchange.  Many practical problems need to be taken into consideration in the design of a real-world networked-control system.  For instance, multi-agent networks are often  subject to external disturbances, which means   that a practical   control strategy   needs to be  robust to unwanted disturbances. However, in the theoretical analysis, such practical issues are usually neglected for simplicity.

Different robust control design techniques are introduced in the literature such as  $H_\infty$-control, risk-sensitive control  and MinMax control, each of which has its own strengths and weaknesses. For example, in risk-sensitive asset management in the financial  market,    a risk factor is utilized   in order to  capture the  randomness of the system; this   suits  applications  with  unknown disturbances. In MinMax control approach, on the other hand,   the  external disturbances are modeled as an adversarial player  attempting  to maximize the cost of the system.  In general, there are two types of formulations for the MinMax control problem:  (a)  hard-constrained formulation, where an upper bound is set on the magnitude of the disturbance,  and  (b) soft-constrained formulation that  penalizes  the disturbance by a negative quadratic cost function~\cite{van2003robust}. It is  demonstrated  in~\cite{van2003robust} that  the hard-constraint formulation is much more complex  and less tractable  than the  soft-constraint formulation. Therefore, in this paper  we   focus on  the soft-constraint formulation only.

There are two  main  challenges concerning a MinMax control  setting in the leader-follower problem. The first one is the computational complexity  that  increases with  the number of followers  (curse of dimensionality). The second challenge is   that  it is not always  feasible to assume that the states of all followers are available, specially when the number of followers is large. In such cases,  a decentralized information structure is more desirable but  it leads to a discrepancy in the followers' information. To address the above  challenges, mean-field models are introduced  in the literature   to provide a tractable approximate solution to large-scale symmetric problems~\cite{Caines2018book,gueant2011mean}.  In the control setting,  mean-field-type game~\cite{bensoussan2013mean,carmona2013control,Djehiche2017} studies an infinite-population model wherein the control optimization  problem reduces to a one-body McKean-Vlasov  optimization problem due to the simplification offered by the negligible effect in the infinite-population. In contrast, mean-field team~\cite{arabneydi2016new} focuses  on the finite-population model, where the effect of  each individual player is not necessarily negligible. For  linear quadratic mean-field teams, a transformation-based technique was first proposed in~\cite{JalalCDC2015} and further extended in~\cite{arabneydi2016new,JalalCCECE2018,JalalCDC2018}, that is similar in spirit to  the completion-of-square approach in mean-field-type game~\cite{elliott2013discrete,duncan2018linear}. In general, mean-field-type game may be viewed as a special case of mean-field team, where the information structure is mean-field sharing and the number of players is infinite.

In  the mean-field games literature, many researchers consider a leaderless network with  a large number of   homogeneous followers  in the presence of an adversarial  disturbance (player). For example,  the authors in~\cite{bauso2016opinion} study a MinMax mean-field-type game problem  in the context of social networks.  The  proposed solution is an approximate robust mean-field equilibrium that  is formulated  in terms of two coupled forward-backward partial differential equations (i.e., the Hamilton-Jacobi-Isaacs and Fokker-Planck-Kolmogorov equations).  In~\cite{huang2017robust}, a  Stackelberg-like mean-field  game problem is considered,  where an adversarial  disturbance moves first and its  worst-case value is computed irrespective of the  actions of followers. Given the  worst-case disturbance, an approximate   robust Nash equilibrium is  obtained for the followers  using two coupled forward-backward  stochastic differential equations.  A similar  Stackelberg-like mean-field   problem is investigated in~\cite{wang2017social}  with social cost function,   where  the variational analysis and  person-by-person optimality principle are employed  to derive an approximate robust mean-field-type strategy in terms of two coupled  forward-backward stochastic differential equations. The authors in~\cite{tajeddini2017robust}  integrate a leader player to the above setting in a sequential fashion, where the method is to first solve a MinMax control problem for the leader against its disturbance (irrespective of the actions of followers), and  then solve a MinMax control problem for the followers  that wish to track a convex combination  of the mean-field and the state of the leader in the presence of disturbances. Unlike the above results  in which the solution is presented in a semi-explicit manner, a leaderless MinMax mean-field-type  game solution is computed explicitly in~\cite{duncan2018linear}  in terms of two Riccati equations, where an existence condition  for the solution in the case of scalar time-invariant coefficients is proposed.

In this paper, a  leader-follower MinMax mean-field team  problem with multivariate time-varying coefficients and  an arbitrary number of homogeneous followers is investigated.  In contrast to~\cite{bauso2016opinion,huang2017robust,wang2017social,
tajeddini2017robust},  we obtain  the unique explicit saddle-point strategy  under mean-field sharing information structure, and then give an approximate solution under intermittent mean-field sharing. It is to be noted that the solution concept of a saddle-point strategy (that simultaneously solves convex and concave optimization problems), is different from a Stackelberg strategy (that solves the two problems sequentially). Also, the Riccati equations obtained in~\cite[Equation 29]{duncan2018linear} may be viewed as a special case of the continuous-time counterparts of the Riccati equations derived in this paper, where the coefficients associated with the leader are zero.

The remainder of the paper is organized as follows. The problem is defined and formulated in Section~\ref{sec:prob}.  The main results are  subsequently presented in Section~\ref{sec:main} along with the required assumptions. In Section~\ref{sec:example},  two numerical examples are given to  demonstrate the effectiveness of the results,  and finally in Section~\ref{sec:conclusion} some concluding remarks are provided.

\section{Problem Formulation} \label{sec:prob}
In this article, $\mathbb{R}$ and $\mathbb{N}$ denote, respectively, the sets of real and natural numbers. For any $k \in  \mathbb{N}$,  $\mathbb{N}_k$  denotes the finite set $\{1,\ldots,k\}$, and $x_{1:k}$ is short-hand notation for $\{x_1,\ldots,x_k\}$. $\mathbb{E}(\boldsymbol \cdot)$ is the expectation of an event, $\COV(\boldsymbol \cdot)$ is the covariance matrix of a random vector, $\TR(\boldsymbol \cdot)$ is the trace of a matrix,  and   $\mathbf{I}$ and $\mathbf{0}$ are, respectively, the identity and  zero matrices.
 
Consider a multi-agent system consisting of one leader and $n \in \mathbb{N}$ homogeneous followers.   Let $x^0_t \in \mathbb{R}^{\ell_x}$, $u^0_t \in \mathbb{R}^{\ell_u}$, $d^0_t \in \mathbb{R}^{\ell_x}$ and $w^0_t \in \mathbb{R}^{\ell_x}$ denote, respectively,  the state, action, disturbance and noise of the leader at time $t \in \mathbb{N}$, where  $\ell_x, \ell_u \in \mathbb{N}$.  Analogously,  denote by $x^i_t \in \mathbb{R}^{\ell_x}$, $u^i_t \in \mathbb{R}^{\ell_u}$, $d^i_t \in \mathbb{R}^{\ell_x}$ and $w^i_t \in \mathbb{R}^{\ell_x}$,  the state, action, disturbance and noise of follower $i \in \mathbb{N}_n$ at time $t \in \mathbb{N}$.  In addition, define the aggregate  state and aggregate action of followers as:
\begin{equation}
\bar x_t:=\frac{1}{n} \sum_{i=1}^n  x^i_t, \quad \bar u_t:=\frac{1}{n} \sum_{i=1}^n u^i_t.
\end{equation}
The dynamics of  the leader at time $t \in \mathbb{N}$ is influenced by the aggregate  state  $\bar x_t$,  the disturbance signal $d_t^0$ and noise  $w_t^0$ as follows: 
\begin{equation} \label{eq:dynamics-leader} 
x^0_{t+1}=A_t^0 x^0_t+B_t^0 u^0_t+S_t^0 \bar{x}_t + d^0_t+ w_t^0,
\end{equation}
where $A^0_t$, $B^0_t$ and  $S^0_t$ are matrices of appropriate dimensions. Furthermore, the dynamics of follower $i$ at time $t$ is affected by the state of the leader $x^0_t$, aggregate state   $\bar x_t$,  local disturbance $d^i_t$  and local noise $w_t^i$ as described  below:
\begin{equation} \label{eq:dynamics-followers} 
x^i_{t+1}=A_t x^i_t + B_t u^i_t + S_t \bar{x}_t + E_t x_t^0 + d_t^i + w_t^i,\hspace{0.3cm} i \in \mathbb{N}_n, \hspace{0.1cm} t \in \mathbb{N}, 
\end{equation}	
where $A_t$, $B_t$, $S_t$ and $E_t$ are matrices of appropriate  dimensions.  Let $T \in \mathbb{N}$ denote  the control horizon, and assume that the primitive random variables $\{ x^0_1, \{x^i_1\}_{i \in \mathbb{N}_n},  w^0_1, \{w^i_1\}_{i \in \mathbb{N}_n}, \ldots,  w^0_T, \{w^i_T\}_{i \in \mathbb{N}_n} \}$ are mutually independent. In addition, it is assumed that the local noises of followers and the noise of the leader have zero mean  and finite covariance matrices.

 To be consistent  with  the terminology  of the  mean-field teams literature~\cite{arabneydi2016new},  the  aggregate  state of  followers is called  \emph{mean-field} in this work. It is to be noted that  the term mean-field  has a slightly  different meaning in mean-field games, where it refers to the aggregate state of an infinite population (as opposed to a finite population) of followers.

In this paper, we consider two non-classical information structures: mean-field sharing (MFS) and intermittent mean-field sharing (IMFS). In  MFS information structure,  the leader has access to its local state as well as the mean-field at any time $t$, i.e., 
\begin{equation}
u^0_t=g^0_t(x^0_t,\bar x_t),
\end{equation}
where $g^0_t:\mathbb{R}^{2\ell_x}   \rightarrow \mathbb{R}^{\ell_u}$. Furthermore, each follower $i \in \mathbb{N}_n$ has access to its local state as well as the state of the leader and the mean-field at time $t$,  i.e.,
\begin{equation}
u^i_t=g^i_t(x^i_t, \bar x_t,x^0_t),
\end{equation}
where $g^i_t:\mathbb{R}^{3\ell_x} \rightarrow \mathbb{R}^{\ell_u}$. In IMFS information structure, the mean-field is observed intermittently,  i.e., 
\begin{equation}
u^0_t=g^0_t(x^0_t, z_t), \quad u^i_t=g^i_t(x^i_t,z_t,x^0_t), \quad \forall i \in \mathbb{N}_n,
\end{equation}
where $z_t:=\bar x_t$  during the time when the mean-field is observed and $z_t:=\B$  during the time when the mean-field is not observed.  In   practice, IMFS information structure is useful  when the number of  followers is neither  that small  (so that the mean-field can be shared at each time instant)  nor  is very  large  (such that the strong law of large numbers can be applied to the mean-field).   In such a case, it is  feasible to obtain the mean-field intermittently such that at some time instants the information structure  is observed  while at some others it is not. When the number of followers is large, IMFS may be reduced to no-mean-field sharing, where the mean-field is not observed at all, i.e., 
\begin{equation}
u^0_t=g^0_t(x^0_t), \quad u^i_t=g^i_t(x^i_t,x^0_t), \quad \forall i \in \mathbb{N}_n.
\end{equation}
The set  of  all control laws  $\mathbf g:=\{g^0_{1:T}, g^1_{1:T}, \ldots,g^n_{1:T}\}$ is called the strategy of the network.

\subsection{Problem statement}
 Let the set $\mathbf d$ be defined as $\{d^0_{1:T}, \{d^i_{1:T}\}_{i \in \mathbb{N}_n}\}$, and $\gamma >0$ be a given  attenuation parameter.  Then the cost function of the system  is defined as follows:
\begin{align}\label{eq:main_cost_function}
&J_n^\gamma(\mathbf g, \mathbf d) \hspace{-.1cm}= \hspace{-.1cm} \mathbb{E} (\sum_{t=1}^T\Big[\frac{1}{n} \sum_{i=1}^n\big[(x^i_t)^\intercal Q_t x^i_t+(u^i_t)^\intercal R_t u^i_t  \hspace{-.1cm}-  \hspace{-.1cm}\gamma^2 (d_t^i)^\intercal d^i_t\big]  \nonumber \\
&\quad  +(x^0_t)^\intercal Q^0_t x^0_t + (u^0_t)^\intercal R^0_t u^0_t  - \gamma^2 (d^0_t)^\intercal d^0_t \nonumber\\
&\quad + (\bar{x}_t - x^0_t)^\intercal F_t (\bar{x}_t - x^0_t)  +\bar{x}_t^\intercal P_t \bar{x}_t+\bar{u}_t^\intercal H_t \bar{u}_t\big]\Big]), 
\end{align}
where $Q_t, Q^0_t, R_t, R^0_t, F_t, P_t$ and $ H_t$ are symmetric matrices of  appropriate dimensions. 
Note that the value of $\gamma$ determines the relative importance of reaching  consensus versus rejecting  disturbance.

\begin{Problem} \label{problem1}
Find the saddle-point  strategy $\mathbf g$ under mean-field sharing information structure  such that
\begin{equation}
J^{\gamma,\ast}_n=\inf_{\mathbf g}\sup_{\mathbf d} J_{n}^\gamma(\mathbf g, \mathbf d).
\end{equation}
\end{Problem}

\begin{Problem} \label{problem2}
Find an approximate saddle-point strategy $\mathbf g_{\varepsilon}$ under intermittent mean-field sharing information structure  such that
$| \sup_{\mathbf d} J_{n}^\gamma(\mathbf g_{\varepsilon}, \mathbf d) -J^{\gamma,\ast}_n |\leq  \varepsilon(n)$,
where $\varepsilon(n) \geq 0$ and $\lim_{n \rightarrow \infty} \varepsilon(n)=0$.
\end{Problem}
\begin{remark}
Problems~1 and~2 can be extended to the case where each follower has an individual weight under the condition presented in~\cite[Assumption 3.6]{arabneydi2016new}.
\end{remark}

The main contributions of the paper are outlined below.
\begin{enumerate}
\item We present a unique saddle-point strategy  in an explicit manner for a leader-follower network under mean-field sharing information structure, where the number of followers is not necessarily large, and hence the action of a  single follower  can not be neglected  (note that   the analysis of non-negligible players is more complex  than the case with  negligible players, in general).
\item  We  propose an approximate saddle-point strategy  under intermittent  mean-field sharing information structure, whose performance is different from that of  mean-field sharing  for any finite number of followers.
\end{enumerate}

\section{Main Results}\label{sec:main}
Define the following matrices at any time $t \in \mathbb{N}_T$:
	\begin{equation}
	\bar A_t \hspace{-.1cm}:= \hspace{-.1cm}\left[ \begin{array}{cc}
	A^0_t &S^0_t\\
	E_t &A_t+S_t
	\end{array}  \right], \quad \bar B_t \hspace{-.1cm}:= \hspace{-.1cm}\left[ \begin{array}{cc}
	B^0_t &\mathbf{0}_{\ell_x \times \ell_u}\\
	\mathbf{0}_{\ell_x \times \ell_u} &B_t
	\end{array}  \right],
	\end{equation}
	\begin{equation}
	\bar Q_t \hspace{-.1cm}:= \hspace{-.1cm}\left[  \hspace{-.2cm}  \begin{array}{cc}
	Q^0_t \hspace{-.1cm}+ F_t & -F_t\\
	- F_t & Q_t  \hspace{-.1cm} +  \hspace{-.1cm}P_t+  \hspace{-.1cm}F_t
	\end{array}  \hspace{-.2cm}   \right],  \bar R_t \hspace{-.1cm}:= \hspace{-.1cm}\left[\hspace{-.2cm}  \begin{array}{cc}
	R^0_t &\mathbf{0}_{\ell_u \times \ell_u}\\
	\mathbf{0}_{\ell_u \times \ell_u} &H_t + R_t
	\end{array} \hspace{-.2cm} \right].
	\end{equation} 
	
\begin{Assumption}\label{ass:convexity}
At any time $t \in \mathbb{N}_T$, matrices  $Q_t$ and $\bar Q_t$  are positive semi-definite, and  matrices $R_t$ and $\bar R_t$ are positive definite.
\end{Assumption}

 It  will be shown later  that  Problem~\ref{problem1} under Assumption~\ref{ass:convexity}  can be cast as a strictly convex optimization problem  with respect to  the  control actions of  the leader and followers, and a strictly concave optimization problem  with respect to the disturbances.
Using the  transformation technique introduced in~\cite{arabneydi2016new},  define  the following variables: $\breve x^i_t:= x^i_t- \bar x_t$,  $\breve u^i_t:= u^i_t- \bar u_t$, $\breve d^i_t:= d^i_t- \bar d_t$ and $\breve w^i_t:= w^i_t- \bar w_t$, where 
 $\bar d_t:=\frac{1}{n} \sum_{i=1}^n d^i_t$ and $\bar w_t:=\frac{1}{n} \sum_{i=1}^n w^i_t$. It follows from~\eqref{eq:dynamics-followers} that:
	\begin{align}\label{eq:dynamics-breve}
	\bar x_{t+1} &= (A_t + S_t) \bar x_t + B_t \bar u_t + E_t x_t^0 + \bar{d}_t + \bar{w}_t, \nonumber \\
	\breve x^i_{t+1}&= A_t \breve x^i_t+ B_t \breve u^i_t + \breve{d}_t^i + \breve{w}_t^i.
	\end{align}
Note that 
	$\frac{1}{n}\sum_{i=1}^n \breve{x}^i_t=\mathbf{0}_{\ell_x \times 1}$, $\frac{1}{n}\sum_{i=1}^n \breve{u}^i_t=\mathbf{0}_{\ell_u \times 1}$, $\frac{1}{n}\sum_{i=1}^n \breve{d}^i_t=\mathbf{0}_{\ell_x \times 1}$ and $\frac{1}{n}\sum_{i=1}^n \breve{w}^i_t=\mathbf{0}_{\ell_x \times 1}$.

Define	the following variables backward in time:
	\begin{equation}\label{eq:delta-riccati}
\begin{cases}
		\breve{M}_t :=Q_{t} +A_t \breve{M}_{t+1} \breve \Delta_t^{-1} A_t^\intercal, \\
		\bar{M}_t :=\bar{Q}_{t} + \bar{A}_t \bar{M}_{t+1} \bar{\Delta}_t^{-1} \bar{A}_t^\intercal, \\
		\breve \Delta_t := \mathbf I_{\ell_x \times \ell_x} + B_t R_t^{-1} B_t^\intercal \breve{M}_{t+1} - \gamma^{-2} \breve{M}_{t+1}, \\
\bar{\Delta}_t := \mathbf I_{2 \ell_x \times 2 \ell_x} + \bar{B}_t \bar{R}_t^{-1} \bar{B}_t^\intercal \bar{M}_{t+1} - \gamma^{-2} \bar{M}_{t+1},  \\
\breve c^i_{t}:= \breve c^i_{t+1} + \TR(\breve M_{t+1} \COV{\breve w^i_t}),  \\
\bar c_{t}:=\bar c_{t+1} + \TR(\bar M_{t+1} \COV([w^0_t, \bar w_t])),
\end{cases}
\end{equation}
where $\breve{M}_{T+1}:= \mathbf{0}_{\ell_x \times \ell_x}$, $\bar{M}_{T+1}:=\mathbf{0}_{2\ell_x \times 2\ell_x}$,  $\bar c_{T+1}:=0$ and $\breve c^i_{T+1}:=0$,  $i \in \mathbb{N}_n$.	
\begin{theorem}\label{thm:1}
Let Assumption~\ref{ass:convexity} hold.   Then:
\begin{enumerate}
\item  Given any  $\gamma >0$,  Problem~\ref{problem1} admits a  unique feedback saddle-point strategy  if  matrices $ \gamma^2 \mathbf I_{\ell_x \times \ell_x} - \breve M_{t+1}$ and $\gamma^2 \mathbf I_{2\ell_x \times 2 \ell_x} - \bar M_{t+1}$ are positive definite  for every $t \in \mathbb{N}_{T-1}$, where $\breve M_{t+1}$ and $\bar M_{t+1}$ are given by~\eqref{eq:delta-riccati}.  
\item  The saddle-point strategy  of   the leader is described by:
\begin{equation}\label{eq:strategy1-optimal}
u^0_t=\bar L^{1,1}_t x^0_t + \bar L^{1,2}_t \bar x_t,
\end{equation}
and for every follower $ i \in \mathbb{N}_n$:
\begin{equation}\label{eq:strategy2-optimal}
u^i_t= \breve L_t x^i_t + \bar L_t^{2,1} x^0_t + (\bar L_{t}^{2,2} - \breve L_t) \bar x_t,
\end{equation}
where  $\left[ \begin{array}{c c}
\bar L^{1,1}_t & \bar L^{1,2}_t\\
\bar L^{2,1}_t & \bar L^{2,2}_t
\end{array} \right]:=-\bar B_t \bar M_{t+1}\bar  \Delta_t^{-1} \bar A_t$ and $\breve L_t:=-B_t \breve M_{t+1} \breve  \Delta_t^{-1} A_t$.
\item The performance  under the saddle-point strategy is given by:
$
J^{\gamma,\ast}_n= \frac{1}{n} \sum_{i=1}^n [\TR(\breve M_1 \COV(\breve x^i_1)) + \breve c^i_1] + \TR(\bar M_1 \COV([x^0_1,\bar x_1]))+ \bar c_1.$
\end{enumerate}
\end{theorem}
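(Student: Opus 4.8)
The plan is to reduce Problem~\ref{problem1} to two \emph{decoupled} soft-constrained zero-sum linear-quadratic games and to solve each by a completion-of-squares argument.

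\emph{Decomposition.} First I would substitute $x^i_t=\breve x^i_t+\bar x_t$ (and the analogous splittings of $u,d,w$) into~\eqref{eq:main_cost_function} and use the orthogonality relations $\tfrac1n\sum_i\breve x^i_t=\mathbf 0$, $\tfrac1n\sum_i\breve u^i_t=\mathbf 0$, etc. Every cross term such as $\bar x_t^\intercal Q_t\big(\tfrac1n\sum_i\breve x^i_t\big)$ then vanishes, so the cost splits additively as $J_n^\gamma=\bar J+\breve J$, where $\bar J$ gathers the leader and mean-field quadratics into the augmented variables $[x^0_t,\bar x_t]$, $[u^0_t,\bar u_t]$, $[d^0_t,\bar d_t]$ weighted by $\bar Q_t,\bar R_t$, and $\breve J=\mathbb{E}\sum_t\tfrac1n\sum_i\big[(\breve x^i_t)^\intercal Q_t\breve x^i_t+(\breve u^i_t)^\intercal R_t\breve u^i_t-\gamma^2(\breve d^i_t)^\intercal\breve d^i_t\big]$ gathers the deviations. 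By~\eqref{eq:dynamics-breve} the augmented state obeys the linear system $(\bar A_t,\bar B_t)$ and each $\breve x^i_t$ obeys the decoupled system $(A_t,B_t)$, so Problem~\ref{problem1} separates into one game on the $2\ell_x$-dimensional augmented system and $n$ identical games on the $\ell_x$-dimensional deviation systems, coupled only through the gauge constraints $\tfrac1n\sum_i\breve u^i_t=\mathbf 0$ and $\tfrac1n\sum_i\breve d^i_t=\mathbf 0$.

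\emph{Completion of squares.} For each game I would posit a quadratic value function $\xi^\intercal M_t\xi+c_t$ and establish a one-step identity that, after stationarizing the inner min--max, expresses the running cost as the telescoping increment $\mathbb{E}[\xi_t^\intercal M_t\xi_t]-\mathbb{E}[\xi_{t+1}^\intercal M_{t+1}\xi_{t+1}]$ plus the noise trace $\TR(M_{t+1}\COV(w_t))$, plus a positive quadratic in $u_t-u_t^\ast$ and a negative quadratic in $d_t-d_t^\ast$; here $u_t^\ast,d_t^\ast$ are the linear feedbacks and $M_t,\breve\Delta_t,\bar\Delta_t$ obey~\eqref{eq:delta-riccati}. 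The maximization over the current disturbance contributes the factor $\gamma^2\mathbf I-M_{t+1}$, whose positive definiteness (the hypothesis of part~1) is exactly what makes that quadratic strictly negative, certifies the invertibility of $\breve\Delta_t,\bar\Delta_t$, and keeps the recursion~\eqref{eq:delta-riccati} well defined; Assumption~\ref{ass:convexity} ($R_t,\bar R_t\succ0$, $Q_t,\bar Q_t\succeq0$) makes the $u_t-u_t^\ast$ quadratic strictly positive. Carrying this out on the augmented system delivers the gains $\bar L_t^{\,\cdot,\cdot}$ and on a single deviation system the gain $\breve L_t$; rewriting $\breve u^i_t=\breve L_t\breve x^i_t+\cdots$ back in the original coordinates $x^i_t,\bar x_t,x^0_t$ produces exactly~\eqref{eq:strategy1-optimal}--\eqref{eq:strategy2-optimal}.

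\emph{Saddle point and value.} Summing the one-step identity over $t$ and using $M_{T+1}=\mathbf 0$ telescopes to
\[
J_n^\gamma(\mathbf g,\mathbf d)=J^{\gamma,\ast}_n+\mathbb{E}\sum_{t=1}^{T}\Big[(u_t-u_t^\ast)^\intercal\mathcal R_t(u_t-u_t^\ast)-(d_t-d_t^\ast)^\intercal\mathcal S_t(d_t-d_t^\ast)\Big],
\]
with $\mathcal R_t\succ0$, $\mathcal S_t\succ0$, and $J^{\gamma,\ast}_n$ equal to the sum of the two initial value functions, i.e.\ $\TR(\bar M_1\COV([x^0_1,\bar x_1]))+\bar c_1+\tfrac1n\sum_i[\TR(\breve M_1\COV(\breve x^i_1))+\breve c^i_1]$, which is part~3. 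Setting $\mathbf d=\mathbf d^\ast$ gives $J_n^\gamma\ge J^{\gamma,\ast}_n$ with equality iff $\mathbf g=\mathbf g^\ast$, and setting $\mathbf g=\mathbf g^\ast$ gives $J_n^\gamma\le J^{\gamma,\ast}_n$ with equality iff $\mathbf d=\mathbf d^\ast$; together these are the saddle-point inequalities, and strict definiteness of $\mathcal R_t,\mathcal S_t$ yields uniqueness. Since $\tfrac1n\sum_i\breve x^i_t=\mathbf 0$, the candidate $\breve u^{i,\ast}_t=\breve L_t\breve x^i_t$ and the worst-case $\breve d^{i,\ast}_t$ automatically satisfy the gauge constraints and depend only on $x^i_t,\bar x_t,x^0_t$, so $\mathbf g^\ast$ is admissible under mean-field sharing.

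I expect the main obstacle to be the constraint handling in the last step: one must verify that restricting to the symmetric, constraint-respecting feedback neither shrinks the minimizer's nor enlarges the maximizer's attainable values, so that the $n$ per-follower saddle values legitimately aggregate into the saddle value of the coupled problem. The subsidiary technical point is proving invertibility of $\breve\Delta_t,\bar\Delta_t$ and well-posedness of the backward recursion~\eqref{eq:delta-riccati} from the positive-definiteness hypotheses, since this underlies both the existence of $u_t^\ast,d_t^\ast$ and the concavity in $\mathbf d$ needed for the supremum to be attained.
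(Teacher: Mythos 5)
Your proposal is correct and follows essentially the same route as the paper's proof: the same transformation splitting the cost and dynamics into the decoupled deviation games in $\breve x^i_t$ and the augmented game in $[x^0_t,\bar x_t]$, followed by a quadratic value-function ansatz solved backward through the Isaacs equation, where your completion-of-squares/telescoping identity is the same computation the paper carries out via gradient stationarity and Hessian sign checks, yielding the identical Riccati recursion~\eqref{eq:delta-riccati} and the same conditions ($R_t,\bar R_t \succ 0$ for strict convexity in the controls, $\gamma^2\mathbf I_{\ell_x \times \ell_x}-\breve M_{t+1}\succ 0$ and $\gamma^2\mathbf I_{2\ell_x \times 2\ell_x}-\bar M_{t+1}\succ 0$ for strict concavity in the disturbances). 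Your explicit check that the candidate $\breve u^{i,\ast}_t$ and worst-case $\breve d^{i,\ast}_t$ automatically satisfy the zero-mean constraints and are measurable under mean-field sharing is a point the paper's appendix passes over silently, but it is a verification within the same argument rather than a different approach.
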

\begin{proof}
The proof is presented in Appendix~\ref{sec:proof}.
\end{proof}	
We now impose the following two assumptions on the model.
\begin{Assumption}\label{ass:1}
The initial states and local noises of the followers are i.i.d. random variables and independent of those of the leader. 
\end{Assumption}

\begin{Assumption}\label{ass:2}
All matrices  in the dynamic equations~\eqref{eq:dynamics-leader} and~\eqref{eq:dynamics-followers} and cost function~\eqref{eq:main_cost_function}  as well as the covariance  matrices are independent of the number of followers.
\end{Assumption}

 Let $\hat m_1$ be the expected value  of the initial states of the followers, and $\hat m_t$ denote an estimate of the mean-field $\bar x_t$ at time $t$   such that  if $z_{t+1}=\B$ at time $t+1$:
 \begin{multline}
 \hat m_{t+1}:= (A_t+ S_t + B_t \bar L^{2,2}_t) \hat m_t + (B_t \bar L^{2,1}_t + E_t) x^0_t + \bar d_t,
 \end{multline}
 and if $z_{t+1}=\bar x_{t+1}$, $ \hat m_{t+1}:=\bar x_{t+1}$.
Under Assumptions~\ref{ass:1} and~\ref{ass:2},  it can be shown that $\hat m_{t+1}$ almost surely converges to $\bar x_{t+1}$ at every time instant, as $n \rightarrow \infty$, due the strong law of large numbers, on noting that  the dynamics of the mean-field under the saddle-point  strategy is described by:
\begin{equation}
 \bar x_{t+1}= (A_t+ S_t + B_t \bar L^{2,2}_t) \bar  x_t + (B_t \bar L^{2,1}_t + E_t) x^0_t + \bar d_t + \bar w_t. 
\end{equation}
We now replace the mean-field $\bar x_t$  in the  saddle-point strategy of Theorem~\ref{thm:1}  with the estimate $\hat m_t$ (that is measurable under  IMFS information structure)\footnote{Note that the worse-case  value of $\bar d_t$ is given by~\eqref{eq:dis2} in the Appendix.} to construct the following  approximate saddle-point strategy:
\begin{equation}\label{eq:strategy1}
\begin{cases}
v^0_t=\bar L^{1,1}_t x^0_t + \bar L^{1,2}_t \hat m_t,\\
v^i_t= \breve L_t x^i_t + \bar L_t^{2,1} x^0_t + (\bar L_{t}^{2,2} - \breve L_t) \hat m_t, \quad i \in \mathbb{N}_n.
\end{cases}
\end{equation}
Since the dynamic equations~\eqref{eq:dynamics-leader} and~\eqref{eq:dynamics-followers},  cost function~\eqref{eq:main_cost_function} and  the  saddle-point  strategies~\eqref{eq:strategy1-optimal},~\eqref{eq:strategy2-optimal} and~\eqref{eq:strategy1}  are bounded and  continuous in $\bar x_t$, it results that~\eqref{eq:strategy1} is an  approximate  saddle-point strategy.  The  reader is referred to~\cite{JalalCDC2018} for  a detailed  proof   in the context of optimal control, which   is similar, to a great extent,  to the convergence proof of MinMax control problem considered   in this subsection,   but note that  the Riccati equations here are different and the  relative errors defined in~\cite{JalalCDC2018} are of  intermittent nature. However,  these differences do not add much complexity  to the convergence proof  because the Riccati equations~\eqref{eq:delta-riccati} do not depend on the number of followers  according to  Assumption~\ref{ass:2}.  Hence, the rate of convergence  with respect to the number of  followers is  $1/n$, similar to~\cite[Theorem 2]{JalalCDC2018}.  This leads to the  following theorem.

\begin{theorem}
Let Assumptions~\ref{ass:convexity}--\ref{ass:2}  hold. The  strategy~\eqref{eq:strategy1} is an approximate saddle-point strategy for Problem~\ref{problem2}.
\end{theorem}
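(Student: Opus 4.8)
The plan is to bound the performance gap $\lvert \sup_{\mathbf d} J_n^\gamma(\mathbf g_\varepsilon,\mathbf d) - J_n^{\gamma,\ast}\rvert$ by tracking the single source of discrepancy between the approximate strategy~\eqref{eq:strategy1} and the exact saddle-point strategy~\eqref{eq:strategy1-optimal}--\eqref{eq:strategy2-optimal}, namely the replacement of the true mean-field $\bar x_t$ by its estimate $\hat m_t$. Introducing the estimation error $e_t := \bar x_t - \hat m_t$, the goal is to show that its mean-square size is $O(1/n)$ and that the induced cost increase is quadratic in $e_t$, so that the overall gap is $O(1/n)$ and hence $\varepsilon(n)\to 0$ as required by Problem~\ref{problem2}.

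First I would analyze the error recursion. Subtracting the update rule for $\hat m_t$ from the closed-loop mean-field dynamics under the saddle-point strategy gives, on the intervals where $z_{t+1}=\B$, the linear recursion $e_{t+1} = (A_t + S_t + B_t \bar L^{2,2}_t)\, e_t + \bar w_t$, with the reset $e_{t+1}=\mathbf 0$ whenever $z_{t+1}=\bar x_{t+1}$. By Assumption~\ref{ass:1} the follower noises are i.i.d. with zero mean, so $\COV(\bar w_t)=\tfrac1n\COV(w^1_t)$, and by Assumption~\ref{ass:2} this covariance does not scale with $n$; hence $\Exp{\lVert \bar w_t\rVert^2}=\TR(\COV(\bar w_t))=O(1/n)$. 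Since the coefficient matrices are bounded and the horizon $T$ is finite, propagating this through the recursion yields $\Exp{\lVert e_t\rVert^2}=O(1/n)$ uniformly in $t\in\mathbb{N}_T$; this is the quantitative form of the almost-sure convergence $\hat m_t\to\bar x_t$ noted before the theorem.

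Next I would bound the cost difference. Because \eqref{eq:strategy1-optimal}--\eqref{eq:strategy2-optimal} and \eqref{eq:strategy1} are affine in their mean-field argument, the closed-loop trajectories produced by $\mathbf g_\varepsilon$ differ from those produced by $\mathbf g$ by terms that are linear in $e_t$ with bounded, $n$-independent gains (the gain differences being built from $\bar L^{2,2}_t$, $\bar L^{1,2}_t$ and $\breve L_t$). Substituting these perturbations into the quadratic cost~\eqref{eq:main_cost_function}, the first-order contributions vanish: the feedback gains of Theorem~\ref{thm:1} satisfy the first-order saddle-point (stationarity) conditions, so the cost increase from perturbing the control around the optimum is second order. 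What remains is a quadratic form in $e_t$ with bounded coefficients, whose expectation is $O(\Exp{\lVert e_t\rVert^2})=O(1/n)$, matching the $1/n$ rate of~\cite[Theorem~2]{JalalCDC2018}.

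The step I expect to be the main obstacle is the order of quantifiers forced by the MinMax structure: the supremum over $\mathbf d$ must be taken \emph{after} the strategy is perturbed, so the adversary's worst-case disturbance against $\mathbf g_\varepsilon$ need not coincide with the one against $\mathbf g$, and the above second-order bound must be made uniform over the relevant disturbances. I would resolve this using the strict concavity of $J_n^\gamma(\mathbf g,\cdot)$, which holds under Assumption~\ref{ass:convexity} together with the positive-definiteness of $\gamma^2\mathbf I-\breve M_{t+1}$ and $\gamma^2\mathbf I-\bar M_{t+1}$ from Theorem~\ref{thm:1}: the unique worst-case disturbance is a bounded, continuous (indeed affine-feedback) function of the state, so it lies in a compact set on which all coefficients are uniformly bounded, and the change in the maximized value is of the same order as the change in the objective. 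Combining the resulting $O(1/n)$ bound with the explicit saddle-point value $J_n^{\gamma,\ast}$ from part~3 of Theorem~\ref{thm:1} gives $\lvert \sup_{\mathbf d} J_n^\gamma(\mathbf g_\varepsilon,\mathbf d)-J_n^{\gamma,\ast}\rvert = \varepsilon(n)$ with $\varepsilon(n)=O(1/n)\to 0$. As the excerpt notes, this argument parallels~\cite{JalalCDC2018}, the only new features being the modified Riccati equations~\eqref{eq:delta-riccati} and the intermittent resets of $\hat m_t$, neither of which alters the rate because~\eqref{eq:delta-riccati} is independent of $n$ by Assumption~\ref{ass:2}.
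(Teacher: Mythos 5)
Your proposal is correct and follows essentially the same route as the paper, which itself only sketches this argument: quantify the law of large numbers via $\COV(\bar w_t)=O(1/n)$ to get a mean-square estimation error of order $1/n$, then use boundedness and continuity of the dynamics, cost, and strategies in $\bar x_t$ (your stationarity/second-order refinement and the uniformity over disturbances included) to transfer this to the cost at rate $1/n$, with the details deferred to the optimal-control analogue in~\cite{JalalCDC2018}. One minor slip that does not affect the conclusion: along the closed loop actually generated by~\eqref{eq:strategy1}, averaging the followers' controls gives $\bar u_t=\breve L_t \bar x_t+\bar L^{2,1}_t x^0_t+(\bar L^{2,2}_t-\breve L_t)\hat m_t$, so the error recursion is $e_{t+1}=(A_t+S_t+B_t\breve L_t)\,e_t+(\bar d_t-\hat{\bar d}_t)+\bar w_t$ (with $\hat{\bar d}_t$ the worst-case disturbance assumed in the $\hat m_t$ update, which the adversary need not play) rather than $(A_t+S_t+B_t\bar L^{2,2}_t)\,e_t+\bar w_t$; since all gains are bounded, $n$-independent by Assumption~\ref{ass:2}, and the horizon is finite, the $O(1/n)$ bound and hence the claimed $\varepsilon(n)\rightarrow 0$ are unchanged.
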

\section{Numerical Examples}\label{sec:example}
In this section,  two numerical examples are provided to illustrate  the  efficacy of  the obtained results.

\textbf{Example 1.} Consider a multi-agent network consisting of one leader and $100$ identical followers whose dynamics are described by equations~\eqref{eq:dynamics-leader} and~\eqref{eq:dynamics-followers}, respectively,  with  the following numerical  parameters: $ A^0_t=0.85$,  $B^0_t=0.15$, $A_t=0.85$,   
$B_t=0.85$,  $S^0_t=0.03$,  $S_t=0.1$, $E_t=0.01$,  $w^i_t \sim \mathcal{N}(0,0.3)$, $\forall i \in \mathbb{N}_n$, $T=20.$

Let the initial state of the leader be $x_1^0 = 30$ and the initial states of the followers be chosen randomly (with uniform distribution) in the interval $[0,20]$.  The followers are exposed to  an external disturbance  given by:
$d_t^i = 0.6 sin(t)$,  $t \in \mathbb{N}_{20}$, $i \in \mathbb{N}_{100}$.
The objective  of the  leader  and  followers  is  to minimize the cost function~\eqref{eq:main_cost_function} under the worst-case disturbance, where at any time $t \in \mathbb{N}_{20}$: $ R_t=70$,  $Q_t= 8$,  $F_t=11$,  $P_t=0.4$,   $R_t^0=50$,   $Q^0_t=0.5$, $ H_t=0.1$.

Sample trajectories of  the leader and followers are depicted in Figure~\ref{fig1}. It is shown that as the attenuation parameter $\gamma$ increases, the fluctuations of the mean-field decrease which means better disturbance rejection.

\begin{figure}[t!]
\centering
\scalebox{.8}{
	\includegraphics[trim={3.2cm 6.3cm 0cm 6.2cm},clip,width=\linewidth]{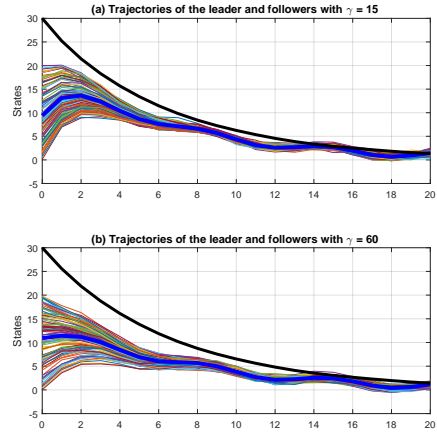} }
	\caption{ Sample trajectories of  the leader and followers in Example 1, where  thin colored  curves are the states of the  followers, thick blue curve is the mean-field,  and   thick black curve is the state of the leader.}\label{fig1}
\end{figure}

\textbf{Example 2.} Consider $100$ followers with identical dynamics that wish to track a reference signal, which may be viewed as a virtual leader with constant state  $x^0_t=10$, $\forall t \in \mathbb{N}_{30}$. The initial states of  the followers are  randomly chosen    in the interval $[0,8]$ with a  uniform distribution.  The dynamics of  the leader and followers are expressed  by the following parameters:
$A^0_t=A_t=1$,  $B^0_t=0$,  $w^i_t \sim \mathcal{N}(0,0.3)$,  $\forall i \in \mathbb{N}_{100}$, $S^0_t=0$,  $S_t=0.04$,  $
B_t=1$,  $T=30$, $ E_t=0.001.$
The followers are subject to local external disturbances given below:
$d_t^i = 0.4 sin(t)$,  $ i \in \mathbb{N}_{100}$, $t \in \mathbb{N}_{30}$.
The   weight matrices in the  cost function are given by: $ R_t=0.11$,   $Q_t= 0.01$,  $F_t=0.07$,  $P_t=0.001$,   
  $R_t^0=10^{-4}$,   $Q^0_t=10^{-4}$,  $H_t=1$.

In Figure~\ref{fig2},  three sample trajectories  of  the states of  followers are displayed for   three different  values  of the  attenuation parameter. It is  observed that as the attenuation parameter increases,  the disturbance  is rejected more strongly at the cost of prolonging the consensus process.
\begin{figure}[t!]
\centering
\scalebox{.95}{
	\includegraphics[trim={1cm 6.2cm 1cm 6.5cm},clip, width=\linewidth]{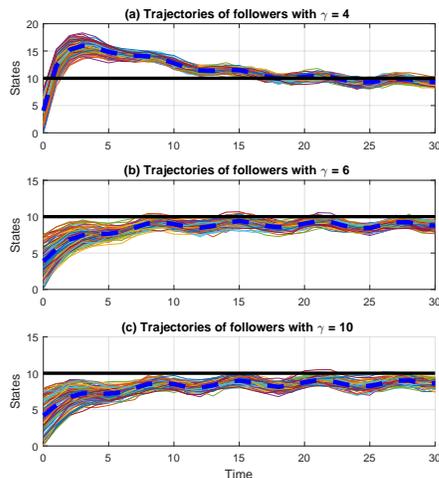} }
	\vspace{-.5cm}
	\caption{The trajectories of  followers for  different values of the attenuation parameter $\gamma$ in Example~2. }\label{fig2}
\end{figure}

\section{Conclusions}\label{sec:conclusion}
In this paper,   a robust control strategy was  proposed for a class of  leader-follower networks.  Two decentralized information structures were studied. For mean-field sharing structure, it was shown that a unique saddle-point strategy exists under  some  mild  assumptions. For  intermittent mean-field sharing, the proposed  strategy was shown to converge to the saddle-point strategy as the number of followers tends to infinity. The main feature of the obtained results is the fact that the solutions are identified by two  Riccati equations that are not in the form of forward-backward equations, and their dimensions do not increase with the number of followers. In addition, it was numerically verified that the disturbance rejection property of the solution outweighs the consensus-reaching behaviour when the attenuation parameter is large.
\printbibliography
\appendices
\section{Proof of Theorem~\ref{thm:1}}\label{sec:proof}
Rewrite the cost function  defined in~\eqref{eq:main_cost_function}  in terms of the new variables as:
	\begin{multline}
	J_n^\gamma(\mathbf g,\mathbf d) \hspace{-.1cm}=\hspace{-.1cm}\mathbb{E}(\sum_{t=1}^T\hspace{-.1cm} \Big[\frac{1}{n} \sum_{i=1}^n  (\breve x^i_t)^\intercal Q_t \breve x^i_t + (\breve u^i_t)^\intercal R_t \breve u^i_t - \gamma^2 (\breve{d}^i_t)^\intercal \breve{d}^i_t \Big]  \\
	+\left[\begin{array}{c}
	x^0_t\\
	\bar x_t
	\end{array}\right]^\intercal \hspace{-.15cm}\bar Q_t\hspace{-.1cm} \left[\begin{array}{c}
	x^0_t\\
	\bar x_t
	\end{array}\right]\hspace{-.1cm}+\hspace{-.1cm}\left[\begin{array}{c}
	u^0_t\\
	\bar u_t
	\end{array}\right]^\intercal \hspace{-.15cm}\bar R_t\hspace{-.1cm}  \left[\begin{array}{c}
	u^0_t\\
	\bar u_t
	\end{array}\right] \hspace{.0cm}-\hspace{-.0cm}\gamma^2 \hspace{-.1cm}\left[\begin{array}{c}
	d^0_t\\
	\bar d_t
	\end{array}\right]^\intercal\hspace{-.1cm}\left[\begin{array}{c}
	d^0_t\\
	\bar d_t
	\end{array}\right] ).
	\end{multline}
At any time $t$, define the following augmented vectors:
$\mathbf x_t:=[{\breve x^1_t}{}^\intercal,\ldots,{\breve x^n_t}{}^\intercal, {x^0_t}^\intercal, {\bar x_t}^\intercal]^\intercal$, 
$\mathbf u_t:=[{\breve u^1_t}{}^\intercal,\ldots,{\breve u^n_t}{}^\intercal, {u^0_t}^\intercal, {\bar u_t}^\intercal]^\intercal$, 
 $\mathbf d_t:=[{\breve d^1_t}{}^\intercal,\ldots,{\breve d^n_t}{}^\intercal, {d^0_t}^\intercal, {\bar d_t}^\intercal]^\intercal$. Suppose for now   that $\mathbf x_t$ is known, and solve the corresponding  Isaacs' equation according to~\cite[Theorem 3.2]{bacsar2008h}, i.e.,  the cost-to-go function at  the terminal time $T$ is:
$
V_{T}(\mathbf x_{T})= \frac{1}{n} \sum_{i=1}^n  (\breve x^i_{T})^\intercal \breve{M}_{T}\breve x^i_{T} + \left[\begin{array}{c}
	x^0_T\\
	\bar x_T
	\end{array}\right]^\intercal \bar M_T \left[\begin{array}{c}
	x^0_T\\
	\bar x_T
	\end{array}\right] 
	+ \frac{1}{n} \sum_{i=1}^n \breve c^i_T + \bar c_T$.  Suppose  that the cost-to-go function takes the following form at time $t+1$:
\begin{multline}\label{eq:value_function_t+1}
V_{t+1}(\mathbf x_{t+1})= \frac{1}{n} \sum_{i=1}^n  (\breve x^i_{t+1})^\intercal \breve{M}_{t+1}\breve x^i_{t+1} \\+ \left[\begin{array}{c}
	x^0_{t+1}\\
	\bar x_{t+1}
	\end{array}\right]^\intercal \bar M_{t+1} \left[\begin{array}{c}
	x^0_{t+1}\\
	\bar x_{t+1}
	\end{array}\right]+ \frac{1}{n} \sum_{i=1}^n \breve c^i_{t+1} + \bar c_{t+1}.
\end{multline}
It is now desired to show that~\eqref{eq:value_function_t+1} holds at time $t$ as well.  It follows from Isaacs' equation that:
\begin{align}\label{eq:value_function_t}
&V_{t}(\mathbf x_t) \hspace{-.1cm}= \hspace{-.1cm} \sup_{\mathbf d_t}  \inf_{\mathbf u_t} ( \frac{1}{n} \sum_{i=1}^n \big[(\breve{x}_t^i)^\intercal Q_t \breve{x}_t^i + (\breve{u}_t^i)^\intercal R_t \breve{u}_t^i - \gamma^2 (\breve{d}_t^i) ^ \intercal \breve{d}_t^i ]\nonumber \\
& +\left[\begin{array}{c}
	x^0_t\\
	\bar x_t
	\end{array}\right]^\intercal \hspace{-.15cm}\bar Q_t\hspace{-.1cm} \left[\begin{array}{c}
	x^0_t\\
	\bar x_t
	\end{array}\right]\hspace{-.1cm}+\hspace{-.1cm}\left[\begin{array}{c}
	u^0_t\\
	\bar u_t
	\end{array}\right]^\intercal \hspace{-.15cm}\bar R_t\hspace{-.1cm}  \left[\begin{array}{c}
	u^0_t\\
	\bar u_t
	\end{array}\right] \hspace{-.1cm}-\hspace{-.1cm}\gamma^2 \hspace{-.1cm}\left[\begin{array}{c}
	d^0_t\\
	\bar d_t
	\end{array}\right]^\intercal \hspace{-.15cm}\left[\begin{array}{c}
	d^0_t\\
	\bar d_t
	\end{array}\right] \nonumber \\
	&  + \Exp{V_{t+1}(\mathbf x_{t+1}) \mid \mathbf x_t, \mathbf u_t, \mathbf d_t}).
\end{align}
From~\eqref{eq:dynamics-breve},~\eqref{eq:value_function_t+1} and \eqref{eq:value_function_t}, one arrives at:
\begin{align}\label{eq:value_function_tt}
&V_{t}(\mathbf x_t)  \hspace{-.1cm}= \hspace{-.1cm} \sup_{\mathbf d_t}  \inf_{\mathbf u_t} ( \frac{1}{n} \sum_{i=1}^n \big[(\breve{x}_t^i)^\intercal Q_t \breve{x}_t^i + (\breve{u}_t^i)^\intercal R_t \breve{u}_t^i - \gamma^2 (\breve{d}_t^i) ^ \intercal \breve{d}_t^i ]\nonumber \\
& +\left[\begin{array}{c}
	x^0_t\\
	\bar x_t
	\end{array}\right]^\intercal \hspace{-.15cm}\bar Q_t\hspace{-.1cm} \left[\begin{array}{c}
	x^0_t\\
	\bar x_t
	\end{array}\right]\hspace{-.1cm}+\hspace{-.1cm}\left[\begin{array}{c}
	u^0_t\\
	\bar u_t
	\end{array}\right]^\intercal \hspace{-.15cm}\bar R_t\hspace{-.1cm}  \left[\begin{array}{c}
	u^0_t\\
	\bar u_t
	\end{array}\right] \hspace{-.1cm}-\hspace{-.1cm}\gamma^2 \hspace{-.1cm}\left[\begin{array}{c}
	d^0_t\\
	\bar d_t
	\end{array}\right]^\intercal \hspace{-.15cm}\left[\begin{array}{c}
	d^0_t\\
	\bar d_t
	\end{array}\right] \nonumber \\
	&+  \mathbb{E}[\frac{1}{n} \sum_{i=1}^n \big[(A_t \breve{x}_t^i + B_t \breve{u}_t^i + \breve{d}_t^i + \breve{w}_t^i) ^ \intercal \breve{M}_{t+1}  \nonumber 
\\
&\quad \times(A_t \breve{x}_t^i + B_t \breve{u}_t^i + \breve{d}_t^i + \breve{w}_t^i)\big] + \frac{1}{n} \sum_{i=1}^n \breve c^i_{t+1} + \bar c_{t+1}\nonumber \\
&+ (\bar A_t \left[ \begin{array}{c}
x^0_t\\
\bar x_t
\end{array}\right]+ \bar B_t \left[ \begin{array}{c}
u^0_t\\
\bar u_t
\end{array}\right]+ \left[ \begin{array}{c}
d^0_t\\
\bar d_t
\end{array}\right] + \left[ \begin{array}{c}
w^0_t\\
\bar w_t
\end{array}\right])^\intercal  \bar M_{t+1} \nonumber \\
& \quad \times (\bar A_t \left[ \begin{array}{c}
x^0_t\\
\bar x_t
\end{array}\right]+ \bar B_t \left[ \begin{array}{c}
u^0_t\\
\bar u_t
\end{array}\right]+ \left[ \begin{array}{c}
d^0_t\\
\bar d_t
\end{array}\right] + \left[ \begin{array}{c}
w^0_t\\
\bar w_t
\end{array}\right])] ).
\end{align}
This yields:
\begin{align}\label{eq:value_function_ttt}
&V_{t}(\mathbf x_t)  \hspace{-.1cm}=  \hspace{-.1cm}\sup_{\mathbf d_t}  \inf_{\mathbf u_t} ( \frac{1}{n} \sum_{i=1}^n \big[(\breve{x}_t^i)^\intercal Q_t \breve{x}_t^i + (\breve{u}_t^i)^\intercal R_t \breve{u}_t^i - \gamma^2 (\breve{d}_t^i) ^ \intercal \breve{d}_t^i] \nonumber \\
& +\left[\begin{array}{c}
	x^0_t\\
	\bar x_t
	\end{array}\right]^\intercal \hspace{-.15cm}\bar Q_t\hspace{-.1cm} \left[\begin{array}{c}
	x^0_t\\
	\bar x_t
	\end{array}\right]\hspace{-.1cm}+\hspace{-.1cm}\left[\begin{array}{c}
	u^0_t\\
	\bar u_t
	\end{array}\right]^\intercal \hspace{-.15cm}\bar R_t\hspace{-.1cm}  \left[\begin{array}{c}
	u^0_t\\
	\bar u_t
	\end{array}\right] \hspace{-.1cm}-\hspace{-.1cm}\gamma^2 \hspace{-.1cm}\left[\begin{array}{c}
	d^0_t\\
	\bar d_t
	\end{array}\right]^\intercal \hspace{-.15cm}\left[\begin{array}{c}
	d^0_t\\
	\bar d_t
	\end{array}\right] \nonumber \\
	&+ \frac{1}{n} \sum_{i=1}^n [(A_t \breve{x}_t^i + B_t \breve{u}_t^i)^\intercal  \breve M_{t+1}  (A_t \breve{x}_t^i + B_t \breve{u}_t^i)  \hspace{-.1cm}+  \hspace{-.1cm}(\breve d^i_t)^\intercal \breve M_{t+1} \breve d^i_t \nonumber \\
	&+2 (\breve d^i_t)^\intercal \breve M_{t+1} (A_t \breve x^i_t  \hspace{-.1cm}+ \hspace{-.1cm} B_t \breve u^i_t)+ 2\breve w^i_t \breve M_{t+1} (A_t \breve x^i_t + B_t \breve u^i_t + \breve d^i_t) \nonumber \\
	&+2 (\breve d^i_t)^\intercal \breve M_{t+1} \breve w^i_t  \hspace{-.1cm}+ \hspace{-.1cm} \TR(\breve M_{t+1} \COV(\breve w^i_t))]   \hspace{-.1cm}+  \hspace{-.1cm} \left[ \begin{array}{c}
 d^0_t\\
\bar d_t
\end{array}\right]^\intercal   \hspace{-.3cm} \bar M_{t+1}  \hspace{-.1cm} \left[ \begin{array}{c}
 d^0_t\\
\bar d_t
\end{array}\right] \nonumber \\
	&+  (\bar A_t \left[ \begin{array}{c}
x^0_t\\
\bar x_t
\end{array}\right]  \hspace{-.1cm}+ \hspace{-.1cm} \bar B_t \left[ \begin{array}{c}
u^0_t\\
\bar u_t
\end{array}\right])^\intercal \bar  M_{t+1}  (\bar A_t \left[ \begin{array}{c}
x^0_t\\
\bar x_t
\end{array}\right]+ \bar B_t \left[ \begin{array}{c}
u^0_t\\
\bar u_t
\end{array}\right]) \nonumber \\
&+2 \left[ \begin{array}{c}
 d^0_t\\
\bar d_t
\end{array}\right]^\intercal  \bar M_{t+1}(\bar A_t \left[ \begin{array}{c}
x^0_t\\
\bar x_t
\end{array}\right]+ \bar B_t \left[ \begin{array}{c}
u^0_t\\
\bar u_t
\end{array}\right]) + \bar c_{t+1}\nonumber \\
&+2 \left[ \begin{array}{c}
\breve w^0_t\\
\bar w_t
\end{array}\right]^\intercal  \bar M_{t+1}(\bar A_t \left[ \begin{array}{c}
x^0_t\\
\bar x_t
\end{array}\right]+ \bar B_t \left[ \begin{array}{c}
u^0_t\\
\bar u_t
\end{array}\right]) + \frac{1}{n} \sum_{i=1}^n \breve c^i_{t+1}  \nonumber \\
&+ 2 \left[ \begin{array}{c}
 d^0_t\\
\bar d_t
\end{array}\right]^\intercal  \bar M_{t+1} \left[ \begin{array}{c}
\breve w^0_t\\
\bar w_t
\end{array}\right] \hspace{-.1cm}+\hspace{-.1cm} \TR(\bar M_{t+1} \COV([w^0_t, \bar w_t]))).
\end{align}
Given any disturbance vector $\mathbf d_t$, we  now compute  the  gradient vector with respect to  $\mathbf u_t$ and  set each component to zero in order to obtain  the optimal actions of the  leader and followers, i.e., one has:
\begin{multline}
 2 (\breve{u}_t^i)^\intercal R_t + 2(\breve{u}_t^i)^\intercal B_t^\intercal \breve{M}_{t+1} B_t + 2(\breve{x}_t^i)^\intercal A_t^\intercal \breve{M}_{t+1} B_t\\ + 2(\breve{d}_t^i)^\intercal \breve{M}_{t+1} B_t = \mathbf{0}_{1\times \ell_u},\quad  \forall i \in \mathbb{N}_n,
\end{multline}
\begin{multline}
2 [{u^0_t}^\intercal, 
{\bar u_t}^\intercal] \bar{R}_t + 2[ 
{u^0_t}^\intercal,
{\bar u_t}^\intercal] \bar{B}_t^\intercal \bar{M}_{t+1} \bar{B}_t \\
+ 2 \left[ \begin{array}{c}
x^0_t\\
\bar x_t
\end{array}\right]^\intercal \bar{A}_t^\intercal \bar{M}_{t+1} \bar{B}_t + 2 \left[ \begin{array}{c}
d^0_t\\
\bar d_t
\end{array}\right]^\intercal  \bar{M}_{t+1} \bar{B}_t =  \mathbf{0}_{1\times 2\ell_u},
\end{multline}
which leads to:
\begin{multline} \label{eq:optimal_breve_u_wrt_x_d}
\breve{u}_t^{i*} = -(R_t + B_t^\intercal \breve{M}_{t+1} B_t)^{-1} (B_t^\intercal \breve{M}_{t+1} A_t)\breve{x}_t^i\\-(R_t + B_t^\intercal \breve{M}_{t+1} B_t)^{-1} (B_t^\intercal \breve{M}_{t+1})\breve{d}_t^i,
\end{multline}
and 
\begin{multline} \label{eq:optimal_u_aug_wrt_x_d}
\left[ \begin{array}{c}
u_t^{0,\ast}\\
\bar u^\ast_t
\end{array}\right]= -(\bar{R}_t + \bar{B}_t^\intercal \bar{M}_{t+1} \bar{B}_t)^{-1} (\bar{B}_t^\intercal \bar{M}_{t+1} \bar{A}_t) \left[ \begin{array}{c}
x_t^0\\
\bar x_t
\end{array}\right]\\
-(\bar{R}_t + \bar{B}_t^\intercal \bar{M}_{t+1} \bar{B}_t)^{-1} (\bar{B}_t^\intercal \bar{M}_{t+1})\left[ \begin{array}{c}
d_t^0\\
\bar d_t
\end{array}\right].
\end{multline}
It is observed that  the Hessian matrix  is diagonal, with matrices $R_t + B_t^\intercal \breve M_{t+1} B_t$ and $\bar R_t + \bar B_t^\intercal \bar M_{t+1} \bar B_t$ as its diagonal terms that are positive definite  according to Assumption~\ref{ass:convexity}.  Hence, the cost function is strictly convex in  the newly defined control actions.  

By incorporating the optimal  strategies~\eqref{eq:optimal_breve_u_wrt_x_d} and~\eqref{eq:optimal_u_aug_wrt_x_d} into~\eqref{eq:value_function_ttt},  calculating  the gradient  with respect to  $\mathbf d_t$ and setting each component to zero, one arrives at the following $n+1$ equations:
\begin{multline} 
 -\breve{M}_{t+1} B_t (R_t + B_t^\intercal \breve{M}_{t+1} B_t)^{-1} B_t^\intercal \breve{M}_{t+1} \breve{d}_t^i 
\\
-\breve{M}_{t+1} B_t (R_t + B_t^\intercal \breve{M}_{t+1} B_t)^{-1} B_t^\intercal \breve{M}_{t+1} A_t  \breve{x}_t^i \\
-\gamma ^2 \breve{d}_t^i + \breve{M}_{t+1} \breve{d}_t^i + \breve{M}_{t+1} A_t \breve{x}_t^i = \mathbf{0}_{\ell_x \times 1}, \quad \forall i \in \mathbb{N}_n,
\end{multline}
and 
\begin{multline} 
 -\bar{M}_{t+1} \bar{B}_t (\bar{R}_t +\bar{B}_t^\intercal \bar{M}_{t+1} \bar{B}_t)^{-1} \bar{B}_t^\intercal \bar{M}_{t+1}  \left[ \begin{array}{c}
d_t^0\\
\bar d_t
\end{array}\right]  \\
- \bar{M}_{t+1} \bar{B}_t (\bar{R}_t+ \bar{B}_t^\intercal \bar{M}_{t+1} \bar{B}_t)^{-1}\bar{B}_t^\intercal \bar{M}_{t+1}\bar{A}_t  \left[ \begin{array}{c}
x_t^0\\
\bar x_t
\end{array}\right] \\
-\gamma ^2  \left[ \begin{array}{c}
d_t^0\\
\bar d_t
\end{array}\right]+\bar{M}_{t+1}  \left[ \begin{array}{c}
d_t^0\\
\bar d_t
\end{array}\right] + \bar{M}_{t+1} \bar{A}_t  \left[ \begin{array}{c}
x_t^0\\
\bar x_t
\end{array}\right] =\mathbf{0}_{2 \ell_x \times 1}.
\end{multline}
After some manipulations,   the worst-case disturbances can be obtained as:
\begin{equation} \label{eq:dis1}
\breve{d}_t ^ {i*} = \gamma ^{-2} \breve{M}_{t+1} \breve \Delta_t^{-1} A_t \breve{x}_t^i,
\end{equation}
and
\begin{equation} \label{eq:dis2}
\left[ \begin{array}{c}
d_t^{0,\ast}\\
\bar d^\ast_t
\end{array}\right] = \gamma ^{-2} \bar{M}_{t+1} \bar{\Delta}_t^{-1} \bar{A}_t \left[ \begin{array}{c}
x_t^0\\
\bar x_t
\end{array}\right] ,
\end{equation}
where  $\breve \Delta_t$ and $\bar \Delta_t$ are given by~\eqref{eq:delta-riccati}.  In addition,  we  find the Hessian matrix which is diagonal with matrices $- \gamma^2 \mathbf I_{\ell_x \times \ell_x} + \breve M_{t+1}$ and $-\gamma^2 \mathbf I_{2\ell_x \times 2 \ell_x} + \bar M_{t+1}$ as its diagonal terms.   Therefore, if these matrices are negative definite, it is concluded that  the cost function is strictly concave with respect to disturbances. The recursion~\eqref{eq:delta-riccati} is finally obtained by incorporating the  worst-case disturbances~\eqref{eq:dis1} and~\eqref{eq:dis2}  into the optimal strategies~\eqref{eq:optimal_breve_u_wrt_x_d} and~\eqref{eq:optimal_u_aug_wrt_x_d} and  comparing the expressions~\eqref{eq:value_function_t+1} and~\eqref{eq:value_function_t} at times $t+1 $ and $t$, respectively, which completes the proof.

\end{document}